\newcommand{\lbar}{\{\kern-0.5ex|}
\newcommand{\rbar}{|\kern-0.5ex\}}
\newcommand*\circled[1]{\tikz[baseline=(char.base)]{
\node[draw,circle,inner sep=1pt] (char) {\footnotesize #1};}}
\newlist{circledenum}{enumerate}{1}
\setlist[circledenum,1]{
    label=\protect\circled{\arabic*},
    leftmargin=2.5em,
    labelsep=0.3em
}
\newtheorem{theorem}{Theorem}
\newtheorem{lemma}{Lemma}
\newtheorem{example}{Example}
\newcommand{\T}{\mathbb{T}}
\newcommand{\X}{\mathbb{X}}
\newcommand{\M}{\mathbb{M}}
\newcommand{\W}{\mathbb{W}}
\DeclareMathOperator{\suf}{suf}
\DeclareMathOperator{\pre}{pre}
\DeclareMathOperator{\spell}{spell}
\DeclareMathOperator{\puff}{puff}
\DeclareMathOperator{\weight}{weight}
\DeclareMathOperator{\cmpr}{cr}
\DeclareMathOperator*{\argmin}{arg\,min}
\begin{document}
\title{Efficient $k$-mer Dataset Compression Using Eulerian Covers of de Bruijn Graphs and BWT}\thanks{The authors are listed in alphabetical order by surname and have contributed equally.}
\author{Herman~Z.~Q.~Chen}\address{School of Mathematical Sciences; Chongqing Key Lab of Cognitive Intelligence and Intelligent Finance, Chongqing Normal University, P.R.\ China. {\bf Email:} zqchern@nankai.edu.cn.}
\author{Sergey Kitaev}\address{Department of Mathematics and Statistics, University of Strathclyde, 26 Richmond Street, Glasgow G1, 1XH, United Kingdom.
{\bf Email:} sergey.kitaev@strath.ac.uk.}
\author{Xiaoyu Lang}\address{School of Mathematical Sciences, Chongqing Normal University, P.R.\ China. {\bf Email:} xylang@cqnu.edu.cn.}
\author{Artem Pyatkin}\address{Sobolev Institute of Mathematics, Koptyug ave, 4, Novosibirsk, 630090, Russia. Novosibirsk State University, Pirogova str. 2, Novosibirsk, 630090, Russia. {\bf Email:} artem@math.nsc.ru.}
\author{Runbin Tang}\address{School of Mathematical Sciences, Chongqing Normal University, P.R.\ China. {\bf Email:} tangrb@aliyun.com.}
\date{...}
\begin{abstract}
Transforming an input sequence into its constituent $k$-mers is a fundamental operation in computational genomics. To reduce storage costs associated with $k$-mer datasets, we introduce and formally analyze MCTR, a novel two-stage algorithm for \textbf{lossless} compression of the $k$-mer multiset. Our core method achieves a minimal text representation ($\W$) by computing an optimal Eulerian cover (minimum string count) of the dataset's de Bruijn graph, enabled by an efficient local Eulerization technique. The resulting strings are then further compressed losslessly using the Burrows-Wheeler Transform (BWT).

Leveraging de Bruijn graph properties, MCTR is proven to achieve linear time and space complexity and guarantees complete reconstruction of the original $k$-mer multiset, including frequencies.

Using simulated and real genomic data, we evaluated MCTR's performance (list and frequency representations) against the state-of-the-art lossy unitigging tool \texttt{greedytigs} (from \texttt{matchtigs}). We measured core execution time and the raw compression ratio ($\cmpr = \weight(\M)/\weight(\W)$, where $\M$ is the input sequence data). Benchmarks confirmed MCTR's data fidelity but revealed performance trade-offs inherent to lossless representation. \texttt{GreedyTigs} was significantly faster. Regarding raw compression, \texttt{GreedyTigs} achieved high ratios ($\cmpr \approx 14$) on noisy real data for its lossy sequence output. MCTR methods exhibited $\cmpr \approx 1$ (list) or even $\cmpr < 1$ (frequency, due to count overhead) on clean simulated data, indicating minimal raw text reduction or even expansion. On real data, MCTR (frequency) showed moderate raw compression ($\cmpr \approx 1.5-2.7$), while MCTR (list) showed none ($\cmpr \approx 1$). Importantly, the full MCTR+BWT pipeline significantly outperforms BWT alone for enhanced \textit{lossless} compression. Our results establish MCTR as a valuable, theoretically grounded tool for applications demanding efficient, \textbf{lossless} storage and analysis of $k$-mer multisets, complementing lossy methods optimized for sequence summarization.
\end{abstract}
\subjclass{68P30, 05C20}
%
%
\maketitle
\section*{Introduction}
\subsection{Background and Motivation}
The rapid advancement of DNA sequencing technologies has enabled large-scale genome studies, bringing both transformative opportunities and substantial computational challenges. Efficiently representing genomic data, particularly $k$-mer sets, is crucial for applications requiring high storage efficiency and fast data retrieval. De Bruijn graphs have become a cornerstone in computational genomics, offering a compressed way to represent $k$-mer sets that supports sequence assembly, error correction, and alignment-free analysis~\cite{Simplitigs2021,GGCAT2023}. These graphs not only compress repeated sequences but also facilitate efficient downstream analysis.

While traditional representations, such as unitigs, are valuable for sequence assembly, they often demand considerable computational resources, especially in datasets with high variability, like bacterial pan-genomes. This has driven research towards more compressed representations that minimize storage requirements while maintaining usability for downstream analyses~\cite{Schmidt2023matchtigs}.

\subsection{Related Work}
Several methods have been developed to address the challenge of compressedly representing $k$-mer sets while preserving their structure. The concept of spectrum-preserving string sets (SPSS) minimizes redundancy while retaining the $k$-mer spectrum of the input sequences.

A notable line of work focuses on traversing the de Bruijn graph to generate compact string representations. Schmidt et al.~\cite{Schmidt2023matchtigs} introduced ``matchtigs'', a near-optimal representation that significantly reduces both the cumulative length and the number of strings compared to unitigs. The use of Eulerian paths is also central to the concept of ``Eulertigs'', introduced by Schmidt and Alanko~\cite{Schmidt2023Eulertigs}, which computes a minimum plain text representation for $k$-mer \textit{sets without repetitions} in linear time. Their approach is highly efficient for unique $k$-mer collections. However, our work addresses the more general and complex problem of compressing $k$-mer \textit{multisets}, where $k$-mers appear with varying frequencies. This generalization is non-trivial, as the underlying de Bruijn graph becomes a multigraph, and handling repetition counts is essential for a lossless representation. Our method can thus be viewed as an extension of the Eulerian traversal concept to the practical scenario of repetitive genomic data.

Another notable approach, ``simplitigs'', introduced by Brinda et al.~\cite{Simplitigs2021}, aims to cover de Bruijn graphs with vertex-disjoint paths, reducing the number of stored records while ensuring complete $k$-mer inclusion. Tools like GGCAT~\cite{GGCAT2023} and USTAR~\cite{USTAR2023} have further advanced $k$-mer set construction and compression through compacted de Bruijn graphs and advanced path selection techniques. Rahman et al.~\cite{USTCompress2021} also developed methods for $k$-mer set compression, such as UST and UST-Compress, using a greedy approach to reduce redundancy.

Despite these improvements in compression ratios for large sequencing datasets, achieving an optimal and efficient representation of \textit{repetitive} $k$-mers remains a significant challenge, which is the primary focus of our proposed algorithm.

\subsection{Research Problem}
Although considerable progress has been made in $k$-mer set representation, existing methods still encounter limitations, particularly in handling repetitive $k$-mers while balancing space efficiency with practical usability~\cite{Schmidt2023matchtigs}. Many current approaches do not fully exploit de Bruijn graph structures to achieve minimal representations and often require decompression or preprocessing for downstream applications.

To address these limitations, we propose a novel algorithm for constructing a minimum compressed representation of $k$-mer sets by integrating Eulerian covers~\cite{panyukova2007eulerian} in de Bruijn graphs with the Burrows-Wheeler Transform (BWT) for lossless compression. Our approach is designed to efficiently manage both unique and repetitive $k$-mers while maintaining linear time and space complexity.

\subsection{Research Objectives and Significance}
This study aims to develop an algorithm that constructs a minimum compressed representation of $k$-mer sets using Eulerian covers in de Bruijn graphs. By integrating the BWT for lossless compression, the proposed method significantly reduces storage requirements and enhances the efficiency of bioinformatics applications. Our approach is especially beneficial for handling repetitive $k$-mers in large-scale genomic studies.

This work contributes a theoretically optimal representation of $k$-mer sets with practical applications in genome assembly, metagenomics, and variant calling. Our evaluation on both simulated and real datasets demonstrates the effectiveness of the proposed algorithm in achieving high compression ratios, making it a valuable addition to computational genomics.

\subsection{Research Methodology}
This paper presents an algorithm for generating a minimum compressed text representation of $k$-mer sets based on de Bruijn graphs. Combined with the Burrows-Wheeler Transform~\cite{bwte}, this lossless compression algorithm substantially reduces the storage requirements for $k$-mer sets. The algorithm constructs a de Bruijn graph from $k$-mers, performs local Eulerization by adding edges, and derives a compressed text representation through an Eulerian cover that spans all edges.
The algorithm operates with linear time and space complexity (formally analyzed in Section~\ref{sec:complexity}), effectively managing both unique and repetitive $k$-mers. Additionally, we examine the effect of separating $k$-mer counts on compression ratios for cases involving non-uniform multiplicities.

\subsection{Paper Structure}
The remainder of this paper is organized as follows: Section~\ref{sec:preliminaries} provides essential background on de Bruijn graphs, Eulerian paths, and $k$-mer representations. Section~\ref{sec:algorithm} details the proposed \textbf{Minimum Compressed Text Representation (MCTR) algorithm}, including graph construction, our local Eulerization technique, Eulerian cover generation, and the final BWT compression stage. Section~\ref{sec:experiments} presents the experimental evaluation, assessing MCTR's performance on simulated and real data, and comparing its lossless compression characteristics against the lossy state-of-the-art tool \texttt{matchtigs}. Finally, Section~\ref{sec:conclusion} summarizes our findings and discusses future research directions.

\section{Preliminaries}\label{sec:preliminaries}
In this section, we introduce foundational concepts from graph theory and $k$-mer analysis that underpin the methods used in this paper.

\subsection{Sets and Multisets} \label{mset-rep}
A set is an unordered collection of distinct elements, where each element appears only once.

The number of elements in a set is called its cardinality or size, denoted by $|A|$. A subset of a set $A$ is a set $B$ such that all elements of $B$ are also in $A$.

A multiset allows elements to appear multiple times, with each occurrence referred to as its multiplicity. As with sets, the order of elements in a multiset is not important. In contrast to sets, elements in a multiset can have multiple occurrences. To represent a multiset $A$ in a compressed form, we list its elements along with their multiplicities, which is commonly known as the \textit{frequency representation}. In the union of multisets, the multiplicities of the same elements are summed.

To distinguish between sets and multisets, we use different notations: $\{ \}$ for sets and $\lbar \rbar$ for multisets. This distinction is important as it reflects whether or not repetitions are allowed in the collection.

Multisets can be represented in two common ways: a \textit{list representation} and a \textit{frequency representation}~\cite{mset}.

In the \textit{list representation}, the multiset is represented as a list or array that explicitly includes all repeated elements. Each occurrence of an element is stored individually without using a frequency count, which can lead to increased storage requirements. In the \textit{frequency representation}, a multiset is represented by storing the unique elements alongside their respective frequencies. The elements and their frequencies are stored in two separate lists: one for the elements and one for their corresponding counts.

For example, consider the multiset $\lbar AA, AT, GC, AA, AA, GC\rbar$. In the list representation, all occurrences of the elements are explicitly stored in the sequence. Alternatively, in the frequency representation, the multiset can be represented as $\{AA:3, AT:1, GC:2\}$. Equivalently, it can be represented as $\{AA, AT, GC\}$ with an associated frequency list \(\{3, 1, 2\}\), where each frequency corresponds to the count of the respective element in the original multiset. This example highlights the distinction between the two representation methods: the list representation explicitly enumerates every occurrence, while the frequency representation groups unique elements and annotates them with their frequencies.

\subsection{Basic Graph Definitions}
A directed graph $G = (V, E)$ consists of a vertex set $V = V(G)$ and an edge set $E = E(G)$. Each edge is an ordered pair of vertices $e = (u, v) \in E$, where $u, v \in V$, indicating a directed edge from $u$ to $v$. We also use notation $u \rightarrow v$ for such an edge.
The vertex $u$ is the tail of edge $e$, and $v$ is its head.
The out-degree of a vertex $v$ (denoted $d^+(v)$) is the number of edges leaving $v$, and the in-degree (denoted $d^-(v)$) is the number of edges entering $v$.
An imbalance of a vertex $v$ is defined as $\delta(v) = d^+(v) - d^-(v)$. If $d^+(v) = d^-(v)$ then $v$ is called balanced.

In an undirected graph, edges are unordered pairs $e = \{u, v\}$, meaning the direction between $u$ and $v$ is not specified. The underlying graph of a directed graph is the undirected graph formed by ignoring the edge directions.

\subsection{Trails and Eulerian Covers}
In a directed graph, a trail is a sequence of vertices and edges, written as $ T = v_0e_1v_1e_2 \ldots v_{\ell-1}e_{\ell}v_{\ell} $, where each edge $ e_i = (v_{i-1}, v_i) $, and all edges in the trail are distinct. To simplify notation, we often represent the trail as a sequence of vertices: $ v_0 \rightarrow v_1 \rightarrow \ldots \rightarrow v_{\ell} $. In undirected graphs, edges in the trail are undirected. If the starting vertex is the same as the ending vertex, the trail forms a circuit.

An Eulerian trail 
is a trail that visits every edge exactly once. If such a trail is a circuit, it is called an Eulerian circuit. A graph is Eulerian if it contains an Eulerian circuit. An Eulerian cover is a set of trails that collectively cover every edge of the graph exactly once.
A graph is local Eulerian if each its component is Eulerian. Clearly, in this case the minimum number of trails required to form an Eulerian cover is equal to the number of connected components. The process of adding edges to make a graph local Eulerian is called local Eulerization.

The Eulerian property~\cite{rahman2017basic} of a directed graph $G$ is formalized in the following theorem:

\begin{theorem}\label{thm:eulerian}
A directed graph $G = (V, E)$ is local Eulerian if and only if it is balanced, i.e., $ d^-(v) = d^+(v) $  for every vertex $v \in V$.
\end{theorem}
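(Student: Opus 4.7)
The plan is to prove the two implications separately, treating the graph component by component, since both "local Eulerian" and "balanced" are properties that hold on each weakly connected component independently.

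For necessity ($\Rightarrow$), suppose $G$ is local Eulerian. Fix any vertex $v$ and let $C$ be the component containing it. By hypothesis $C$ admits an Eulerian circuit, i.e.\ a closed trail using every edge of $C$ exactly once. Walking along this circuit, every visit to $v$ consists of exactly one incoming edge immediately followed by one outgoing edge; because each edge incident to $v$ is used exactly once, pairing up incoming with outgoing visits gives $d^-(v) = d^+(v)$. Vertices with no incident edges are trivially balanced.

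For sufficiency ($\Leftarrow$), I would reduce to a single weakly connected component $C$ (the balance condition is inherited by $C$). The plan has two steps. First, I would show that a weakly connected balanced directed graph is in fact \emph{strongly} connected: let $S$ be the set of vertices reachable from a fixed $u\in V(C)$ and suppose for contradiction $S \subsetneq V(C)$. By construction no edge leaves $S$; summing the imbalances $\sum_{v\in S}\delta(v) = 0$ equals (edges leaving $S$) $-$ (edges entering $S$), forcing also no edge to enter $S$, which contradicts weak connectivity. Second, with $C$ strongly connected and balanced, I would construct an Eulerian circuit by the standard cycle-splicing argument: start at any vertex $x_0$ and walk along unused edges; the balance condition guarantees that whenever we enter an intermediate vertex we can leave, so the walk can only terminate back at $x_0$, yielding a closed trail $T$. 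If $T$ uses every edge we are done; otherwise, strong connectivity together with the fact that the subgraph of unused edges is still balanced forces some vertex on $T$ to have an unused outgoing edge. Repeating the walking argument from that vertex yields another closed trail, which we splice into $T$ at that vertex. Since the edge set is finite, this process terminates with an Eulerian circuit.

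The main obstacle I expect is the first step of sufficiency, namely justifying the upgrade from weak to strong connectivity via the imbalance-sum identity over a vertex cut $S$; the cycle-splicing construction itself is routine once strong connectivity is in hand, and the necessity direction is essentially a bookkeeping observation. I would also want to explicitly note the degenerate case of isolated vertices, which are trivially balanced and trivially Eulerian (the empty circuit), so they cause no issue in either direction.
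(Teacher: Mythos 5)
The paper does not actually prove this statement: Theorem~\ref{thm:eulerian} is quoted as a known fact with a citation, so there is no in-paper proof to compare against. Your argument is the correct standard one and is essentially complete. The necessity direction (pairing each entry to a vertex on the circuit with the subsequent exit) is sound, including the wrap-around pairing at the circuit's start vertex. The sufficiency direction is also sound: the cut identity $\sum_{v\in S}\delta(v)=(\text{edges leaving }S)-(\text{edges entering }S)$ correctly upgrades weak to strong connectivity for a balanced component, and the Hierholzer-style cycle-splicing then works because a closed trail removes equal in- and out-degree at every vertex, so the residual graph stays balanced and any trail vertex meeting an unused edge must have an unused \emph{outgoing} edge. (In fact weak connectivity already suffices for the splicing step --- you only need an unused edge incident to a vertex of the current trail --- so the strong-connectivity lemma, while true and commonly stated, is not strictly necessary.) One cosmetic remark: the paper defines $\delta(v)=d^+(v)-d^-(v)$ in the preliminaries but uses the opposite sign in Algorithm~\ref{alg:local_Eulerization}; your identity is stated for the former convention and is consistent either way up to a global sign. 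Your handling of isolated vertices via the empty circuit matches the paper's definition of ``local Eulerian.''
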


\subsection{$k$-mers and de Bruijn Graphs}
An alphabet $\Sigma$ is a finite set; any sequence of elements from $\Sigma$ is called a word. A word of length $k$ is known as a $k$-word or $k$-mer. For example, DNA sequences in biology can be seen as words over the alphabet $\Sigma = \{A, C, G, T\}$. Given a $k$-mer $w = w_1w_2\ldots w_k$, its $i$-prefix is 
$\pre_i(w) = w_1 \ldots w_i$, and its $i$-suffix is $\suf_i(w) = w_{k-i+1}\ldots w_k$. Specifically, the prefix and suffix of a $k$-mer of length $k$ refer to its $(k-1)$-prefix and $(k-1)$-suffix, respectively.

Given a set of $k$-mers $\M$, the corresponding de Bruijn graph $dBG(\M) = (V, E)$ is a directed graph where vertices are the $(k-1)$-prefixes and $(k-1)$-suffixes of the $k$-mers in $\M$. The edges represent the $k$-mers themselves, with each edge directed from the prefix to the suffix. Formally, $V = \cup_{w \in \M} \{\pre_{k-1}(w), \suf_{k-1}(w)\}$ and $E = \lbar (\pre_{k-1}(w), \suf_{k-1}(w)) : w \in \M \rbar$. When $\M$ is a multiset, $dBG(\M)$ becomes a multigraph, allowing multiple edges between the same pair of vertices.

In $dBG(\M)$, if a trail $T = u_1 \rightarrow u_2 \rightarrow \cdots \rightarrow u_t$ exists, where the suffix of $u_i$ matches the prefix of $u_{i+1}$, the readout sequence of the trail is defined as $\spell(T) = u_{1,1} \ldots u_{1,k-1} u_{2,k-1} \cdots u_{t,k-1}$, where $u_i=u_{i,1}u_{i,2}\ldots u_{i,k-1}$. The $k$-mer multiset $\puff(w)$ of a text sequence $w = w_1w_2\ldots w_{\ell}$, $(\ell \geq k)$ consists of all 
its patterns of length $k$, i.e., $w_iw_{i+1}\ldots w_{i+k-1}$ where $1 \leq i \leq \ell-k+1$. The $k$-mer multiset represented by a set of sequences $\W$ is defined as
$\puff(\W) = \cup_{w \in \W} \puff(w)$.

\subsection{Compressed Text Representation of $k$-mer Sets}
The {\em weight function} $\weight(\X)$ for the word set $\X$ is defined as the total length of $\X$, including separators between words, i.e., $\weight(\W) = \sum_{w \in \W} |w| + |\W| - 1$. For example, for $\W = \lbar \texttt{ATGC}, \texttt{ACGA}, \texttt{ACGTA} \rbar$, $\weight(\W) = 15$. A {\em compressed text representation} of a $k$-mer set $\M$ is a set of words $\W$ such that $\puff(\W) = \M$ and $\weight(\M)>\weight(\W)$. We define the {\em compression ratio} as:
\begin{align}
\cmpr_{\M}(\W) & = \frac{\weight(\M)}{\weight(\W)}. \label{eq:crm}
\end{align}
 The optimal compressed text representation minimizes $\weight(\W)$. The following theorem formalizes the relationship between the compression ratio and the optimal compressed representation:

\begin{theorem}\label{thm:MN}
If $\W$ is a compressed text representation of a $k$-mer set $\M$, then $\cmpr_{\M}(\W) < k + 1$ and the optimal compressed representation
consists of the minimum number of words, i.e.,
\begin{align}
\argmin_{\puff(\W) = \M} \weight(\W) = \argmin_{\puff(\W) = \M} |\W|. \label{eq:minW}
\end{align}
\end{theorem}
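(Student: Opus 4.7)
The plan is to reduce both claims to a single algebraic identity expressing $\weight(\W)$ as an affine function of $|\W|$. First I would compute $\weight(\M)$ directly from the definition: every element of $\M$ has length exactly $k$, so $\weight(\M) = k|\M| + |\M| - 1 = (k+1)|\M| - 1$.

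The key step is to obtain the corresponding closed form for $\weight(\W)$. For any $w \in \W$, the multiset $\puff(w)$ contains exactly $|w| - k + 1$ $k$-mers. Since $\puff(\W) = \M$ as multisets, summing over $\W$ yields $\sum_{w \in \W}(|w| - k + 1) = |\M|$, i.e., $\sum_{w \in \W}|w| = |\M| + (k-1)|\W|$. Plugging this into the definition of $\weight$ then gives
\[
\weight(\W) = |\M| + k|\W| - 1.
\]

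With this identity in hand, both conclusions follow at once. For the argmin identity~\eqref{eq:minW}: since $|\M|$ and $k$ depend only on the input, $\weight(\W)$ is a strictly increasing affine function of $|\W|$, so the set of minimizers of $\weight$ coincides with the set of minimizers of $|\W|$. For the compression-ratio bound: substituting into~\eqref{eq:crm} gives
\[
\cmpr_{\M}(\W) = \frac{(k+1)|\M| - 1}{|\M| + k|\W| - 1},
\]
and the inequality $\cmpr_{\M}(\W) < k+1$ reduces, after cross-multiplying, to $(k+1)(k|\W| - 1) > -1$, which is immediate from $|\W| \geq 1$ and $k \geq 1$.

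The main obstacle, and really the only point requiring care, is the multiset bookkeeping in the key step: one must verify that $\sum_{w \in \W}|\puff(w)| = |\M|$ holds \emph{with multiplicities}, not merely as a statement about sets of distinct $k$-mers. This is where the assumption $\puff(\W) = \M$ interpreted in the multiset sense is essential; once that is in place, all the rest is elementary algebra.
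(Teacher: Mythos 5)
Your proposal is correct and follows essentially the same route as the paper: both derive the identity $\weight(\W) = |\M| + k|\W| - 1$ from $|\M| = \sum_{w\in\W}(|w|-k+1)$ and read off the argmin claim from monotonicity in $|\W|$. The only (immaterial) difference is in the last step for the ratio bound, where you cross-multiply directly instead of first bounding $\weight(\W) \geq |\M|+k-1$ via the case $|\W|=1$ as the paper does; both are valid.
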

\begin{proof}
By definition, we have:
\begin{align}
\weight(\W) & = \sum_{w \in \W} |w| + |\W| - 1. \label{eq:wN}
\end{align}
Moreover,
\begin{align}
|\M| & = \sum_{w \in \W} ( |w| - k + 1 ) = \sum_{w \in \W} |w| - (k - 1)|\W|.   \label{eq:NM}
\end{align}
Combining equations \eqref{eq:NM} and \eqref{eq:wN}, we get:
\begin{align}
\weight(\W) & = |\M| + k|\W| - 1. \label{eq:wNM}
\end{align}
Therefore, $\weight(\W)$ decreases as $|\W|$ decreases, thus proving equation \eqref{eq:minW}.
Furthermore, $\weight(\W) \geq |\M| + k - 1$, and equality holds when $|\W| = 1$. Also, $\weight(\M) = (k+1)|\M| - 1$, so we have:
\begin{align}
\cmpr_{\M}(\W) \leq \frac{(k+1)|\M| - 1}{|\M| + k - 1} = k + 1 - \frac{k^2}{|\M| + k - 1} < k+1.
\end{align}
\end{proof}

\subsection{Burrows-Wheeler Transform and Run-Length Encoding}\label{sec:bwt}

The {\em Burrows-Wheeler Transform} ({\em BWT}), in combination with {\em Run-Length Encoding} ({\em RLE}), provides an effective two-step strategy for compressing repetitive sequences. While early implementations of the BWT required $O(n \log n)$ time and super-linear space, modern algorithms based on suffix arrays can construct the BWT in $O(n)$ time and space, making it highly practical for large-scale datasets.

More recent work has further lowered the theoretical cost of BWT construction. Kempa and Kociumaka~\cite{Kempa2022resolution} resolved the BWT conjecture, developing a string-synchronizing-set approach that achieves $O(n/\sqrt{\log n})$ time and requires only $O(n/\log n)$ workspace. In parallel, Bannai et al.~\cite{Bannai2021bijective} presented an $O(n)$-time algorithm for both the bijective BWT (BBWT) and the extended BWT (eBWT). Additionally, the efficiency of the subsequent RLE step has been studied by Bentley et al.~\cite{Bentley2020runs}, who analyzed the complexity of minimizing the number of runs in the BWT via optimal alphabet reordering.

The BWT is a reversible transformation that permutes the characters of a sequence $w$ to group identical characters together. The process involves creating a matrix $M(w)$ of all cyclic rotations of $w$, sorting the rows lexicographically, and extracting the last column, which is the $\text{BWT}(w)$. This resulting string is highly amenable to compression techniques like RLE, as illustrated below.

\begin{example}
\label{ex:bwt}
    Consider the string $w = \texttt{BANANA\$}$, where \texttt{\$} represents a unique end-of-sequence symbol. We assume the standard character order where \texttt{\$} $<$ \texttt{A} $<$ \texttt{B} $<$ \dots $<$ \texttt{Z}. Sorting the cyclic rotations of $w$ yields the following matrix:
    \[
        \begin{matrix}
            \texttt{\$BANANA} \\
            \texttt{A\$BANAN} \\
            \texttt{ANA\$BAN} \\
            \texttt{ANANA\$B} \\
            \texttt{BANANA\$} \\
            \texttt{NA\$BANA} \\
            \texttt{NANA\$BA}
        \end{matrix}
    \]
    
 \noindent
 The last column of this matrix, \texttt{ANNB\$AA}, is the BWT of the original string. Applying RLE to this result gives \texttt{AN2B\$A2}.
\end{example}
As the example illustrates, the key property of the BWT is its ability to cluster identical characters, which is then exploited by RLE. The transform is fully reversible, allowing the original sequence to be perfectly reconstructed from $\text{BWT}(w)$ and the primary index (the row number of the original string in the sorted matrix). The practical utility of this approach for genomics is rooted in the modern linear-time algorithms mentioned earlier, with further innovations providing memory-efficient variants for processing datasets that exceed available RAM~\cite{bwte}.
\subsection{Combining BWT and Minimum Compressed Text Representations}

In the context of genomic data compression, the BWT can be applied to the compressed text representations derived from the Eulerian covering of de Bruijn graphs. After obtaining a compressed representation $\W$ from a $k$-mer set $\M$, the BWT is applied to concatenation of all words in $\W$, separated by a special symbol. This process  enhances compressibility by clustering recurrent patterns, which enables the effective application of advanced compression techniques to the transformed data.

The combination of Eulerian de Bruijn graph traversal and the BWT facilitates a two-stage compression process: (1) representing the $k$-mer set compressedly through graph traversal, and (2) applying the BWT to exploit the inherent repetitive structures for additional compression. This approach ensures that the representation remains both lossless and highly compressible, making it particularly effective for handling extensive genomic datasets.

\section{Algorithm for Minimum Compressed Text Representation}\label{sec:algorithm}
    This section details our algorithm for generating a minimum compressed text representation of a $k$-mer set. The entire five-stage process is illustrated in Figure~\ref{fig:main}, which walks through the compression of the sample $k$-mer multiset $\M$ for $k=4$:
\begin{equation}
\M = \{\texttt{ATAC}, \texttt{ATCA}, \texttt{ATGA}, \texttt{ATGC}, \texttt{CATC}, \texttt{TCAT}, \texttt{TGCT}\}
\end{equation}

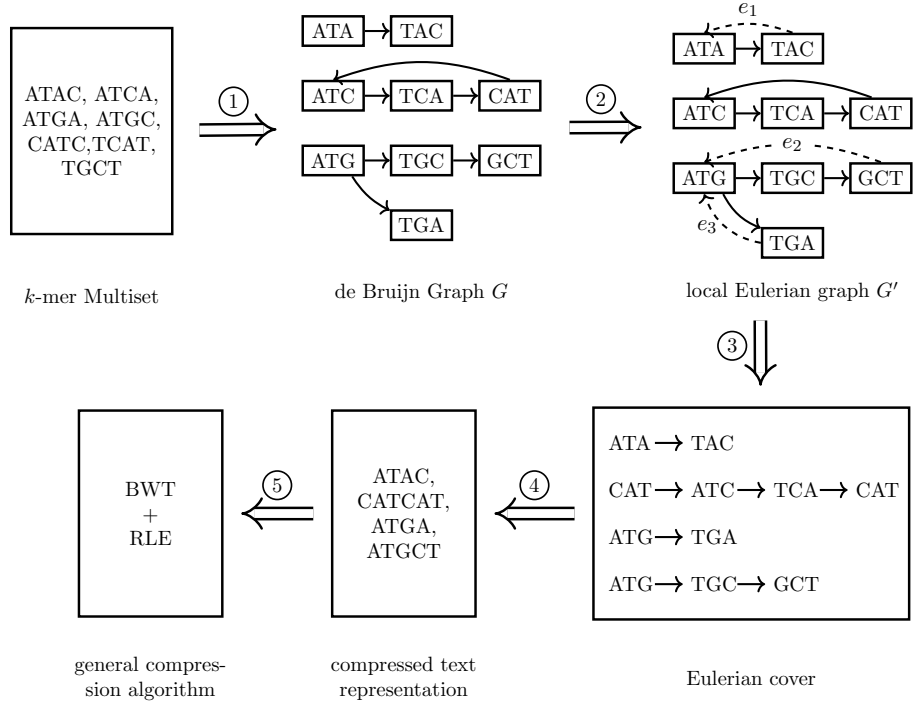
\begin{figure}[h]
	\centering
	\resizebox{0.8\textwidth}{!}
	{%
		\begin{tikzpicture}[node distance=1.5cm,on grid]
			\definecolor{black}{RGB}{0,0,0}

			\tikzset{double arrow/.style={- implies,
						double distance=1.5mm,line width=1.2pt,draw=black}}

			\tikzstyle{Set} = [rectangle,line width=1.2pt,minimum width=2cm,minimum height=3.5cm,text centered,text width =2.5cm,draw=black]

			\tikzstyle{kmer} = [rectangle, line width=1.2pt,minimum width=0.8cm,minimum height=0.5cm,text centered,text width =0.8cm,draw=black]

			\tikzstyle{seq} = [rectangle, line width=1.2pt,minimum width=0.6cm,minimum height=0.5cm,text centered,text width =0.65cm,draw=white,draw opacity=0]

			\tikzstyle{background} = [rectangle,line width=1.2pt,minimum width=4.3cm, minimum height=3.6cm,draw=black]

			\tikzstyle{arrow} = [double,double distance=2pt,
			line width=1pt,-Latex]
			\node [Set] (1) {ATAC, ATCA, ATGA, ATGC, CATC,TCAT, TGCT};

			\node [kmer] (2a)  [right=of 1,xshift=2.6cm,yshift=1.7cm] {ATA};
			\node [kmer] (2b)  [right=of 2a] {TAC};
			\node [kmer] (2c)  [below=of 2a,yshift=0.4cm] {ATC};
			\node [kmer] (2d)  [right=of 2c] {TCA};
			\node [kmer] (2e)  [right=of 2d] {CAT};
			\node [kmer] (2f)  [below=of 2c,yshift=0.4cm] {ATG};
			\node [kmer] (2g)  [right=of 2f] {TGC};
			\node [kmer] (2h)  [right=of 2g] {GCT};
			\node [kmer] (2i)  [below=of 2g,yshift=0.4cm] {TGA};

			\draw[->,line width=1pt] (2a) -- (2b);
			\draw[->,line width=1pt] (2c) -- (2d);
			\draw[->,line width=1pt] (2d) -- (2e);
			\draw[->,line width=1pt] (2f) -- (2g);
			\draw[->,line width=1pt] (2g) -- (2h);
			\draw[->,line width=1pt,color=black] (2e.{+90}) to[bend right=20]  (2c.{+90});
			\draw[->,line width=1pt] (2f.{+320}) to[bend right=15] (2i.{+155});

			\begin{scope}[xshift=5cm]
				\node [kmer] (3a)  [right=of 2a,xshift=4.8cm,,yshift=-0.3cm] {ATA};
				\node [kmer] (3b)  [right=of 3a] {TAC};
				\node [kmer] (3c)  [below=of 3a,yshift=0.4cm] {ATC};
				\node [kmer] (3d)  [right=of 3c] {TCA};
				\node [kmer] (3e)  [right=of 3d] {CAT};
				\node [kmer] (3f)  [below=of 3c,yshift=0.4cm] {ATG};
				\node [kmer] (3g)  [right=of 3f] {TGC};
				\node [kmer] (3h)  [right=of 3g] {GCT};
				\node [kmer] (3i)  [below=of 3g,yshift=0.4cm] {TGA};

				\draw[->,line width=1pt] (3a) -- (3b);
				\draw[->,line width=1pt] (3c) -- (3d);
				\draw[->,line width=1pt] (3d) -- (3e);
				\draw[->,line width=1pt] (3f) -- (3g);
				\draw[->,line width=1pt] (3g) -- (3h);

				\draw[->,dashed,line width=1pt,color=black] (3b.{+90}) to [bend right=20]  node (e_{1}) [midway, above] {\large $e_{1}$}  (3a.{+90});

				\draw[->,line width=1pt] (3f.{+320}) to[bend right=20] (3i.{+155});

				\draw[->,line width=1pt,color=black] (3e.{+120}) to[bend right=20]  (3c.{+90});

				\draw[->,dashed,line width=1pt] (3h.{+90}) to [bend right=20] node[fill=white,midway,sloped] {\large $e_{2}$} (3f.{+90});

				\draw[->,dashed,line width=1pt,color=black] (3i.{+180}) to[bend left=30] node[midway, left] {\large $e_{3}$} (3f.{+270});
			\end{scope}

			\begin{scope}[yshift=-3cm]
				\node [seq] (4a)  [below=of 3f,xshift=-1.3cm,yshift=-3cm] {ATA};
				\node [seq] (4b)  [right=of 4a,xshift=-0.1cm] {TAC};
				\node [seq] (4c)  [below=of 4a,yshift=0.7cm] {CAT};
				\node [seq] (4d)  [right=of 4c,xshift=-0.1cm] {ATC};
				\node [seq] (4f)  [right=of 4d,xshift=-0.1cm] {TCA};
				\node [seq] (4z)  [right=of 4f,xshift=-0.1cm] {CAT};
				\node [seq] (4g)  [below=of 4c,yshift=0.7cm] {ATG};
				\node [seq] (4h)  [right=of 4g,xshift=-0.1cm] {TGA};
				\node [seq] (4i)  [below=of 4g,yshift=0.7cm] {ATG};
				\node [seq] (4j)  [right=of 4i,xshift=-0.1cm] {TGC};
				\node [seq] (4k)  [right=of 4j,xshift=-0.1cm] {GCT};

				\draw[->,line width=1pt] (4a) to (4b) ;
				\draw[->,line width=1pt] (4c) to (4d) ;
				\draw[->,line width=1pt] (4d) to (4f) ;
				\draw[->,line width=1pt] (4f) to (4z) ;
				\draw[->,line width=1pt] (4g) to (4h) ;
				\draw[->,line width=1pt] (4i) to (4j) ;
				\draw[->,line width=1pt] (4j) to (4k) ;

			\end{scope}

			\begin{scope}[xshift=-4cm]
				\node  [rectangle, line width=1.2pt,minimum width=2cm,minimum height=3.5cm,text centered,text width =2.2cm,draw=black] (5) [left=of 4a,xshift=-2.3cm,yshift=-1.2cm,] {ATAC, \ CATCAT, \ ATGA,\ ATGCT };
			\end{scope}

			\begin{scope}[xshift=-4cm]
				\node  [rectangle, line width=1.2pt,minimum width=2cm,minimum height=3.5cm,text centered,text width =2.2cm,draw=black] (6) [left=of 5,xshift=-2.8cm] {BWT \\+ \\ RLE };
			\end{scope}

			\begin{scope}[on background layer]
				\node (2) [fit = (2a)(2b)(2f)(2h)(2i)] {};
				\node (3) [fit = (e_{1})(3f)(3h)(3i)] {};
				\node[background] (4) [fit = (4a)(4b)(4z)(4k)] {};

			\end{scope}

			\node[below of=1,node distance=2.8cm] {$k$-mer Multiset};
			\node[below of=2,node distance=2.8cm] {de Bruijn Graph $G$};
			\node (A) [below of=3,node distance=2.8cm] {local Eulerian graph $G'$};
			\node[below of=4,node distance=2.8cm] {Eulerian cover};
			\node[below of=5,node distance=2.8cm,rectangle,minimum width=3cm,text centered,text width =3cm] {compressed text representation};
			\node[below of=6,node distance=2.8cm,rectangle,minimum width=3cm,text centered,text width =3cm] {general compression algorithm};

			\draw[double arrow,shorten >=3mm, shorten <=4.4mm] (1) --  node[midway, above=1mm, sloped] {\tikz[baseline=(X.base)]{\node[draw,thick,circle,inner sep=2pt] (X) {1};}} (2);

			\draw[double arrow ,shorten >=4mm, shorten <=3.5mm] (2) --  node[midway, above=1mm, sloped,xshift=-0.1cm] {\tikz[baseline=(X.base)]{\node[draw,thick,circle,inner sep=2pt] (X) {2};}} (3);

			\draw[double arrow,shorten >=3.5mm, shorten <=2mm] (A.{+208}) --  node[midway, left=1mm,yshift=0.2cm] {\tikz[baseline=(X.base)]{\node[draw,thick,circle,inner sep=2pt] (X) {3};}} (4.{+85});

			\draw[double arrow, shorten >=3mm,shorten <=3mm] (4) --  node[midway, above=1mm, sloped] {\tikz[baseline=(X.base)]{\node[draw,thick,circle,inner sep=2pt] (X) {4};}} (5);

			\draw[double arrow,shorten >=3mm, shorten <=3mm] (5) --  node[midway, above=1mm, sloped] {\tikz[baseline=(X.base)]{\node[draw,thick,circle,inner sep=2pt] (X) {5};}} (6);

		\end{tikzpicture}}
	\caption{Flowchart of the Minimum Compressed Text Representation Algorithm}
    \label{fig:main}
\end{figure}

The stages of the algorithm are as follows:

\begin{enumerate}[label=\protect\circled{\arabic*}, leftmargin=2.5em, labelsep=0.3em]
    \item \textbf{Constructing the de Bruijn Graph}: Each $k$-mer in $\M$ corresponds to a directed edge from its $(k-1)$-prefix to its $(k-1)$-suffix, forming the de Bruijn graph $G$.

    \item \textbf{Local Eulerization}: To ensure all edges can be covered by a minimum number of paths, we transform $G$ into a local Eulerian graph $G'$ by strategically adding a minimum set of new edges, $E'$.

    \item \textbf{Generating the Eulerian Cover}: We use Hierholzer's algorithm to compute an Eulerian cover $\T$ of $G'$. This process finds a minimum set of trails that collectively traverse every original edge in $G$ exactly once.

    \item \textbf{Obtaining the Compressed Text Representation}: Each trail in the cover $\T$ is spelled out into a text string. This produces the minimum compressed text representation $\W = \{ \texttt{ATAC}, \texttt{CATCAT}, \texttt{ATGA},\texttt{ATGCT} \}$. This representation is significantly more compact, with a weight reduction from $\weight(\M)=34$ to $\weight(\W)=22$.

    \item \textbf{Applying BWT and RLE}: For a final layer of compression, the strings in $\W$ are concatenated into a single sequence $S = \texttt{ATAC,CATCAT,ATGA,ATGCT\$}$ (using `\texttt{,}' as a delimiter and `\texttt{\$}' as a terminator). The BWT is applied to $S$, yielding $S_{BWT} = \texttt{TTACGTC\$C,,AT,GTTCAAAAA}$. This new string, with its clustered characters, is then efficiently compressed by RLE to produce $S_{RLE} = \texttt{T2ACGTC\$C,2AT,GT2CA5}$, which has a final weight of 19.

\end{enumerate}

This five-stage pipeline provides a formally grounded method for $k$-mer set compression, seamlessly integrating graph-theoretic optimization with well-established lossless compression techniques.

\subsection{Algorithm Overview}
We now present the detailed algorithm, named the \textbf{Minimum Compressed Text Representation (MCTR) Algorithm}, which formalizes the steps outlined above.

\begin{algorithm}[H]
    \caption{Minimum Compressed Text Representation (MCTR) Algorithm}
    \label{alg:MCTR}
    \begin{algorithmic}[1]
        \Require A $k$-mer set $\M$
        \Ensure A minimum compressed text representation $\W$
        \State Construct the de Bruijn graph $G = dBG(\M)$ from $\M$
        \State Apply local Eulerization (Algorithm~\ref{alg:local_Eulerization}) to obtain a local Eulerian graph $G'$
        \State Generate an Eulerian cover $\T$ of $G'$ using Hierholzer's algorithm (Algorithm~\ref{alg:Hierholzer})
        \State Spell out the trails in $\T$ to form the compressed text representation $\W$
        \State Compress $\W$ using BWT and RLE
    \end{algorithmic}
\end{algorithm}

The MCTR algorithm is designed for high efficiency, capable of processing large-scale $k$-mer datasets. A detailed, step-by-step analysis of its linear time and space complexity, along with a discussion of its theoretical advantages compared to related methods, is provided in Section~\ref{sec:complexity}.

\subsection{Local Eulerization}
In Step~\circled{2}, the \textbf{local Eulerization Algorithm} (Algorithm~\ref{alg:local_Eulerization}) adds edges to the de Bruijn graph $G$ to produce a local Eulerian graph $G'$. Lemma~\ref{thm:local_Euler} demonstrates that this edge addition method is optimal, minimizing the number of edges required.

\begin{algorithm}[H]
    \caption{Local Eulerization Algorithm}
    \label{alg:local_Eulerization}
    \begin{algorithmic}[1]
        \Require A de Bruijn graph $G = (V, E)$
        \Ensure A local Eulerian graph $G' = (V, E \cup E')$, added edge set $E'$
        \State Initialize an empty set $E'$ and two sequences $S^+$ and $S^-$
        \For{each vertex $v \in V$}
            \State Compute the in-degree $d^-(v)$ and out-degree $d^+(v)$ of $v$
            \State $\delta(v) \gets d^-(v) - d^+(v)$
            \If{$\delta(v) > 0$}
                \State Append $\delta(v)$ copies of $v$ to $S^+$
            \ElsIf{$\delta(v) < 0$}
                \State Append $-\delta(v)$ copies of $v$ to $S^-$
            \EndIf
        \EndFor
        \For{each pair $(s_i^+, s_i^-)$ from $S^+$ and $S^-$}
            \State Add the edge $(s_i^-, s_i^+)$ to $E'$
        \EndFor
        \State \Return $G' = (V, E \cup E')$ and $E'$
    \end{algorithmic}
\end{algorithm}

\begin{lemma}\label{thm:local_Euler}
The output graph $G' = (V, E \cup E')$ from Algorithm~\ref{alg:local_Eulerization} is a local Eulerian graph. Furthermore, the algorithm adds the minimum number of edges required to achieve this.
\end{lemma}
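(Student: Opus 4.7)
The plan is to split the lemma into two claims: (i) the graph $G'$ output by Algorithm~\ref{alg:local_Eulerization} is local Eulerian, and (ii) no alternative edge set can achieve local Eulerianness with fewer than $|E'|$ additions. I will write $\delta(v) = d^+_G(v) - d^-_G(v)$ in the preliminaries convention, and invoke the standard identity $\sum_{v \in V} \delta(v) = 0$ (each edge contributes $+1$ to one out-degree and $+1$ to one in-degree), from which $\sum_{\delta(v) > 0} \delta(v) = \sum_{\delta(v) < 0} (-\delta(v))$.

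For (i), the first step is to use this identity to conclude $|S^+| = |S^-|$, so the pairing loop in lines~11--13 consumes both lists exactly. Then, vertex by vertex, I would verify that every vertex appearing $|\delta(v)|$ times in the appropriate list receives exactly that many compensating incident edges: a vertex with excess out-degree is made a head of $\delta(v)$ new edges, and a vertex with excess in-degree is made a tail of $-\delta(v)$ new edges. After the loop, $d^+_{G'}(v) = d^-_{G'}(v)$ for every $v \in V$, so $G'$ is balanced and Theorem~\ref{thm:eulerian} yields local Eulerianness as a black box.

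For (ii), the proof is a counting lower bound. Let $E''$ be any edge set such that $G'' = (V, E \cup E'')$ is local Eulerian, and denote by $a(v)$ and $b(v)$ the number of edges of $E''$ entering and leaving $v$, respectively. Balance at $v$ in $G''$ forces $b(v) - a(v) = -\delta(v)$, so $a(v) \geq \max(\delta(v), 0)$. Summing over all vertices,
\[
|E''| \;=\; \sum_{v \in V} a(v) \;\geq\; \sum_{\delta(v) > 0} \delta(v) \;=\; |S^+| \;=\; |E'|,
\]
which matches the number of edges that Algorithm~\ref{alg:local_Eulerization} actually inserts.

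The main technical obstacle is really just the lower bound of (ii), and even it reduces to the vertex-wise balance condition together with the identity $\sum_v \delta(v) = 0$. A minor bookkeeping point worth flagging is the sign convention used in line~4 of Algorithm~\ref{alg:local_Eulerization} (which writes $\delta = d^- - d^+$, opposite to the preliminaries): the proof must interpret the labels $S^+, S^-$ so that every added edge $(s_i^-, s_i^+)$ indeed runs from a vertex needing an extra out-edge to a vertex needing an extra in-edge, after which both parts of the argument go through with no further difficulty.
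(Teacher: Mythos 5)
Your proposal is correct and follows essentially the same route as the paper: balance every vertex and invoke Theorem~\ref{thm:eulerian} for part (i), and a counting lower bound of $\tfrac{1}{2}\sum_{v\in V}|\delta(v)|$ added edges for part (ii) --- your version of the latter, which bounds an arbitrary competitor $E''$ vertex-by-vertex via $a(v)\ge\max(\delta(v),0)$, is just a more explicit write-up of the paper's terser counting argument. Your flag about the sign convention in line~4 of Algorithm~\ref{alg:local_Eulerization} versus the preliminaries, and the resulting orientation of the added edges $(s_i^-,s_i^+)$, is also well taken, since the paper's own proof silently assumes the orientation you describe (surplus-in-degree vertices become tails, surplus-out-degree vertices become heads).
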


\begin{figure}[h]
	\centering
	\resizebox{0.8\textwidth}{!}
	{
		\begin{tikzpicture}
			[node distance=2cm,on grid,>={Stealth[round]},
			every place/.style={minimum size=4mm,thick}]

			\node [place] (8) [label={right:{\large 9}}] {};
			\node [place] (9) [left=of 8,label={left:{\large 6}}] {};
			\node [place] (w1) [left=of 9,label={right:{\large 4}}] {};
			\node [place] (3) [left=of w1,label={left:{\large 5}}] {};
			\node [place] (2) [left=of 3,label={left:{\large 2}}] {};
			\node [place] (1) [above=of 2,label={left:{\large 1}}] {};

			\node [place] (6) [above=of 9,label={left:{\large 7}}] {};
			\node [place] (7) [right=of 6,label={right:{\large 8}}] {};
			\node [place] (4) [right=of 1,xshift=1cm,label={above:{\large 3}}] {};

			\draw[-Latex,line width=1pt] (2) -- (1);
			\draw[-Latex,line width=1pt] (3) -- (4);
			\draw[-Latex,line width=1pt] (4) -- (w1);
			\draw[-Latex,line width=1pt] (w1) -- (3);
			\draw[-Latex,line width=1pt] (9) -- (6);
			\draw[-Latex,line width=1pt] (6) -- (7);
			\draw[-Latex,line width=1pt] (8) -- (9);

			\begin{scope}[xshift=13cm]
				\node [place] (c8) [label={right:{\large 9}}] {};
				\node [place] (c9) [left=of c8,label={150:{\large 6}}] {};
				\node [place] (cw1) [left=of c9,label={60:{\large 4}}] {};
				\node [place] (c3) [left=of cw1,label={left:{\large 5}}] {};
				\node [place] (c2) [left=of c3,label={left:{\large 2}}] {};
				\node [place] (c1) [above=of c2,label={left:{\large 1}}] {};

				\node [place] (c6) [above=of c9,label={left:{\large 7}}] {};
				\node [place] (c7) [right=of c6,label={right:{\large 8}}] {};
				\node [place] (c4) [right=of c1,xshift=1cm,label={above:{\large 3}}] {};

				\draw[-Latex,line width=1pt] (c2) -- (c1);
				\draw[-Latex,line width=1pt] (c3) -- (c4);
				\draw[-Latex,line width=1pt] (c4) -- (cw1);
				\draw[-Latex,line width=1pt] (c9) -- (c6);
				\draw[-Latex,line width=1pt] (c9) -- (c6);
				\draw[-Latex,line width=1pt] (c6) -- (c7);
				\draw[-Latex,line width=1pt] (c9) -- (c8);
				\draw[-Latex,line width=1pt] (cw1) -- (c3);
				\draw (c8.{135}) edge[-Latex,dashed,line width=1.2pt,bend right=20] (c9.{45});
				\draw[-Latex,dashed,line width=1.2pt] (c7) -- (c9);
				\draw (c1) edge[-Latex,dashed,line width=1.2pt,bend left=20] (c2);

			\end{scope}

			\begin{scope}[on background layer]
				\node (r1) [fit=(1)(2)(7)(8)] {};
				\node (r2) [fit=(c1)(c2)(c7)(c8)] {};
			\end{scope}
			\draw[- implies, double distance=1.5mm,line width=1.2pt,draw=black,shorten >=3mm, shorten <=5mm] (r1) --  node[midway, above, sloped]{\large local Eulerizing} (r2);

			\node[below of=r1, node distance=2.5cm] {\large $G$};
			\node[below of=r2, node distance=2.5cm] {\large $G^{'}$};

		\end{tikzpicture}}
		\caption{Illustration of Local Eulerization on Graph $G$ to Obtain Graph $G'$}
		\label{fig:eg_local_eulerian}
	\end{figure}
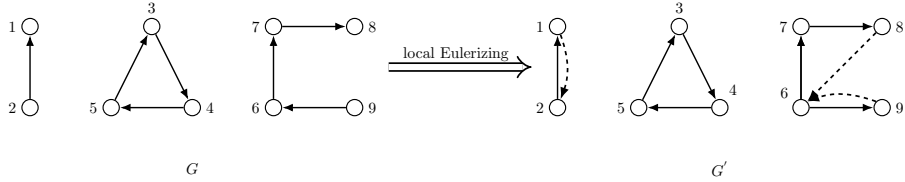

\begin{proof}
First, we show that $G'$ is local Eulerian. In the original graph $G$, each unbalanced vertex $v$ has an imbalance $\delta(v) = d^-(v) - d^+(v) \neq 0$. When $\delta(v) > 0$ (or $\delta(v) < 0$), the added edges in $E'$ correct this imbalance by adding edges with $v$ as a tail (resp., as a head). After these additions, all vertices in $G'$ are balanced. By Theorem~\ref{thm:eulerian}, $G'$ is thus local Eulerian.

To establish the minimality of $E'$, note that each added edge in $E'$ increases the in-degree of one vertex and the out-degree of another by one. To balance all unbalanced vertices in $G$, the total imbalance must be zero, and the number of added edges is exactly half the sum of the absolute values of imbalances:
\begin{align}
|E'| = \frac{1}{2} \sum_{v \in V} |\delta(v)|.
\end{align}
This is the minimum number of edges needed to balance the graph, demonstrating that the algorithm adds the fewest edges necessary for local Eulerization.
\end{proof}

\subsection{Generating the Compressed Text Representation}
\label{sec:gen_text}

In Step~\circled{3}, we generate the Eulerian cover, and in Step~\circled{4}, we spell out the corresponding strings to form the final compressed text representation $\mathbb{W}$. A significant challenge arises here: the Eulerized graph $G'$ contains artificial edges from the set $E'$ that do not correspond to any $k$-mers in the original input $\mathbb{M}$. A naive spelling of the complete Eulerian tour found in $G'$ would result in a single, incorrect sequence containing artificial $k$-mers, thereby corrupting the data.

To solve this, we introduce the \textbf{Eulerian Cover Generation Algorithm (Algorithm 3)}, a modified version of Hierholzer's algorithm. The key innovation is to track the origin of each traversed edge. The algorithm uses these artificial edges from $E'$ solely as non-emitting ``bridges'' to connect valid trails from the original graph $G$.

The process works as follows: the algorithm traverses the graph, building up a path. When it backtracks (i.e., pops a vertex from its stack), it reconstructs the corresponding sequence in reverse. Crucially, if the algorithm backtracks over an artificial edge, it recognizes this as the end of a valid, contiguous trail from the original graph. At this point, it finalizes the currently constructed string, adds it to the output set $\mathbb{W}$, and resets to begin a new string for the next segment. This mechanism ensures that the artificial edges are never spelled out, guaranteeing that the final representation $\mathbb{W}$ is a lossless and correct representation of the original $k$-mer multiset $\mathbb{M}$.

\begin{algorithm}[H]
\caption{Eulerian Cover Generation Algorithm}
\label{alg:Hierholzer}
\begin{algorithmic}[1]
\State \textbf{Input:} A local Eulerian graph $G'=(V, E \cup E')$, with added edges $E'$
\State \textbf{Output:} A minimum compressed text representation $\mathbb{W}$
\Statex
\State Initialize an empty list $\mathbb{W}$
\State Initialize all edges in $E \cup E'$ as unvisited
\While{there exists an unvisited edge in $E \cup E'$}
    \State Choose an arbitrary vertex $u$ with an unvisited outgoing edge as the start of a new tour
    \State Initialize an empty path $P$ and an empty stack $S$
    \State Push $u$ onto $S$
    \While{$S$ is not empty}
        \State Let $v$ be the vertex at the top of $S$
        \If{$v$ has an unvisited outgoing edge $(v, w)$}
            \State Mark $(v, w)$ as visited
            \State Push $w$ onto $S$
        \Else
            \State Pop $v$ from $S$
            \State Prepend $v$ to path $P$
        \EndIf
    \EndWhile
    \State \Comment{The complete tour is now in $P$. Now, split it by artificial edges.}
    \State Initialize an empty string $s$
    \For{$i \leftarrow 1$ to $|P|-1$}
        \State Let $v_i = P[i]$ and $v_{i+1} = P[i+1]$
        \If{$s$ is empty}
            \State $s \leftarrow \text{label}(v_i)$ \Comment{Start string with the full (k-1)-mer label}
        \EndIf

        \If{the edge $(v_i, v_{i+1}) \in E'$}
            \State \Comment{Found an artificial edge; end the current string.}
            \State Append $s$ to $\mathbb{W}$
            \State $s \leftarrow \text{empty string}$
        \Else
            \State \Comment{Real edge; append the last character of the next vertex.}
            \State $s \leftarrow s + \text{last\_char}(\text{label}(v_{i+1}))$
        \EndIf
    \EndFor
    \If{$s$ is not empty} \Comment{Append the final string if it wasn't ended by an artificial edge.}
        \State Append $s$ to $\mathbb{W}$
    \EndIf
\EndWhile
\State \textbf{return} $\mathbb{W}$
\end{algorithmic}
\end{algorithm}

\subsection{Applying BWT and RLE}
In Step~\circled{5}, the compressed text representation is further compressed using the BWT followed by the RLE. The sequences in $\W$ are concatenated into a single string, with each component separated by a unique delimiter (e.g., `,') and terminated by an end-of-sequence marker (e.g., `\$'). This concatenated sequence is then transformed using the BWT, which rearranges characters to cluster similar elements, enhancing the effectiveness of the RLE.

%
Combining the BWT and the RLE ensures that the final compressed representation of the $k$-mer set is efficient for storage. The overall time and space complexity for this step are both $O(n)$, where $n$ is the length of the concatenated sequence, making this approach suitable for large-scale genomic datasets.

\subsection{Comparison of List and Frequency Representations for $k$-mer Datasets}
As discussed in Subsection \ref{mset-rep}, the input to the first step of the algorithm is provided in the list representation of the $k$-mer dataset. Alternatively, if the frequency representation is used, the input in Step \circled{1} consists of the unique elements of the $k$-mer dataset, while the corresponding frequency sequence is produced alongside the processed data in Step \circled{4}. The effectiveness of these two approaches depends on the frequency distribution of elements within the $k$-mer dataset.

The storage requirements for these representations differ significantly based on the frequency of elements. For instance, when the frequency of a $k$-mer $w$, denoted as $c(w)$, satisfies \(c(w) \leq 9\), we have
\begin{itemize}
    \item[i)] in the frequency representation, two additional characters are required: one as a separator and one digit $(1, 2, \ldots, 9)$ to represent the frequency.
    \item[ii)] in the list representation, \(c(w)\) extra characters are needed to store the last character of the $k$-mer, with the worst-case requirement of \((k+1) \times c(w)\) characters.
\end{itemize}

The choice between list representation and frequency representation is context-dependent. The frequency representation is generally more storage-efficient for datasets with high repetition of elements, whereas the list representation is simpler to implement and may be more suitable for datasets with lower repetition rates.

\subsection{Complexity Analysis and Comparison with Matchtigs}
\label{sec:complexity}

Here, we provide a formal analysis of the time and space complexity of our MCTR algorithm and clarify its relationship with other state-of-the-art methods like Matchtigs~\cite{Schmidt2023matchtigs}, addressing the important points raised during the review process.

\subsubsection{Complexity of the MCTR Algorithm}
Our algorithm is designed to operate in linear time and space with respect to the input size. Let $|\mathbb{M}|$ be the total number of $k$-mers in the input multiset, $|V|$ be the number of unique $(k-1)$-mers (vertices), and $|E| = |\mathbb{M}|$ be the number of edges in the de Bruijn graph. The complexity of each step is as follows:

\begin{description}
    \item[Step~\circled{1}: Construct de Bruijn graph] Using a hash table to store and retrieve vertices (the $(k-1)$-mers), the graph can be constructed in $O(|\mathbb{M}| \cdot k)$ time. Since $k$ is a small, fixed constant, the complexity is effectively linear, i.e., $O(|\mathbb{M}|)$. The space required is $O(|V| + |E|)$ to store the graph structure.

    \item[Step~\circled{2}: Local Eulerization] This step (Algorithm 2) involves a single pass over all vertices to compute degrees and imbalances, which takes $O(|V|)$ time. Identifying vertices for the sets $S^+$ and $S^-$ and subsequently adding the $|E'|$ new edges also requires time proportional to $|V|$ and $|E'|$. Since $|E'| \le |E|$, this entire process is bounded by $O(|V|+|E|)$, which is linear.

    \item[Step~\circled{3}: Generate Eulerian cover] We use Hierholzer's algorithm (Algorithm 3), which is known to find an Eulerian path/circuit in time proportional to the number of edges and vertices in the graph. For the Eulerized graph $G'=(V, E \cup E')$, this step takes $O(|V| + |E \cup E'|) = O(|V|+|E|)$ time.

    \item[Step~\circled{4}: Spell out trails] This process involves traversing the generated paths and concatenating characters from the vertex labels. The total work is proportional to the total length of the output strings, which we denote as $N$. This is an $O(N)$ operation, and since $N$ is on the order of $|\mathbb{M}|$, it is also linear.

    \item[Step~\circled{5}: BWT and RLE Compression] Modern implementations of the Burrows-Wheeler Transform, based on suffix arrays, have a time and space complexity of $O(N)$, where $N$ is the length of the concatenated input string. The subsequent Run-Length Encoding is a single pass, also taking $O(N)$ time.
\end{description}

Since each step of the MCTR algorithm has a linear time and space complexity, the overall algorithm is linear with respect to the input size.

\subsubsection{Comparison with Matchtigs}
A key question is how our algorithm achieves linear time while the optimal algorithm for a related problem, addressed by Matchtigs~\cite{Schmidt2023matchtigs}, requires $O(n^3m)$ time. The distinction lies in the underlying graph representation and the precise problem being solved.

\begin{itemize}
    \item \textbf{Matchtigs} operates on an \textit{overlap graph}, where vertices can represent entire sequencing reads and edges denote overlaps. Finding a minimal representation in such a graph is related to the NP-hard Shortest Common Superstring problem. Matchtigs provides a near-optimal solution by solving a minimum weight perfect matching problem, which is computationally intensive.

    \item \textbf{Our MCTR algorithm}, in contrast, operates on a \textit{de Bruijn graph}, where vertices are $(k-1)$-mers and edges are $k$-mers. This is a more constrained and structured representation. Our goal is to find a minimum \textit{edge path cover} for this specific graph. This problem is known to be solvable efficiently. By first making the graph Eulerian (our Local Eulerization step), the problem is reduced to finding a set of paths that cover all original edges, which can be accomplished in linear time using a modified Hierholzer's algorithm.
\end{itemize}

In summary, the linear complexity of our algorithm is a direct result of leveraging the structural properties of the de Bruijn graph and formulating the compression problem as an optimal Eulerian cover problem. This formulation is fundamentally different from the more general and computationally complex problem addressed by Matchtigs, thus explaining the difference in time complexity.

\subsubsection{Comparison with \texttt{matchtigs}}
A key aspect distinguishing our MCTR algorithm is its guaranteed linear time complexity, contrasting sharply with related approaches like `matchtigs' \cite{Schmidt2023matchtigs}, which involves solving a minimum weight perfect matching problem, a computationally more intensive task. This difference stems primarily from the specific problem formulation and the balancing strategy employed within the de Bruijn graph framework.

Both MCTR and ``matchtigs'' operate on variations of the de Bruijn graph constructed from $k$-mers (or unitigs derived from them). The core challenge in generating a concise representation lies in converting the typically non-Eulerian graph into one or more Eulerian paths or circuits.

\begin{itemize}
\item \textbf{\texttt{matchtigs}} aims to find a minimum-length plain text representation of the $k$-mer set by allowing repeated $k$-mers, formulated as a minimum-weight path cover problem in the de Bruijn graph.
To achieve this, it addresses the non-Eulerian nature of the graph by finding a minimum weight perfect matching between nodes with unbalanced in-degrees and out-degrees. The ``weight" typically corresponds to the shortest path distance between unbalanced nodes within the graph. This requires computationally expensive steps: calculating numerous shortest paths (e.g., using Dijkstra's algorithm) and then solving the minimum-weight perfect matching problem (e.g., using Blossom V) has a worst-case complexity of $O(N^3)$, where $N$ is the number of unbalanced nodes, and requires precomputing shortest paths between them.
While yielding high-quality paths, this pursuit of minimum weight balancing incurs significant computational cost.
Furthermore, while ``matchtigs" preserves the set of $k$-mers (i.e., it is lossless in terms of $k$-mer presence), it does not retain $k$-mer abundance information.

\item \textbf{Our MCTR algorithm}, conversely, prioritizes lossless representation and computational efficiency by targeting the theoretically minimal number of paths required to cover all edges. Our Local Eulerization step (Algorithm~\ref{alg:local_Eulerization}) achieves graph balancing by adding the minimum necessary number of edges, pairing unbalanced nodes in an arbitrary but efficient manner (linear time scan), without calculating shortest paths or minimizing total weight. This simplification reduces the balancing step to linear time. Subsequently, finding the Eulerian cover using Hierholzer's algorithm (Algorithm~\ref{alg:Hierholzer}) also operates in linear time with respect to the number of edges.
\end{itemize}

In summary, MCTR achieves linear time complexity ($O(|\M|)$, where $|\M|$ is the number of input $k$-mers) by formulating the problem as finding a minimum cardinality edge path cover on the de Bruijn graph and employing a computationally inexpensive balancing strategy. This contrasts with `matchtigs', which solves a more complex minimum weight balancing problem to optimize path quality, leading to higher computational complexity but resulting in a lossy representation. The efficiency of MCTR stems directly from leveraging the de Bruijn graph's structure and adopting a problem formulation focused on lossless representation with minimal path count rather than minimal path weight.

\section{Numerical Experiments}\label{sec:experiments}

We conducted experiments on both simulated and real datasets to evaluate the performance of our Minimum Compressed Text Representation (MCTR) algorithm. Our implementation was developed in Rust, leveraging its performance and safety features for handling graph operations efficiently. All experiments were performed on a Linux server with an Intel Xeon CPU and 64 GB RAM.  We evaluated performance based on key metrics: \textbf{core algorithm execution time} (excluding optional verification for MCTR), the \textbf{raw (uncompressed) output file size} ($\weight(\W)$), and the resulting \textbf{compression ratio} as defined in Eq.~(\ref{eq:crm}), calculated versus the size of the intermediate \texttt{.reads} file ($\weight(\M)$).

\subsection{Performance on Simulated Data}
First, we demonstrate the inherent characteristics of the MCTR algorithm in a controlled setting, relating its performance to theoretical bounds. According to Theorem~\ref{thm:MN}, the upper bound for the compression ratio of the intermediate text representation $\W$ (before secondary compression like BWT or gzip) is $k+1$. Higher $k$-mer set density leads to higher compression ratios.

In simulations using $k=10$ and varying the sampling ratio $r$ (proportion of $\Sigma^k$ sampled), we confirmed this relationship. As illustrated in Figure~\ref{fig:sim10}, the compression ratio $\cmpr_{\M}(\W)$ of the MCTR text representation increases with $r$, approaching the theoretical upper bound $k+1$. Higher $k$-mer multiplicities also led to improved compression ratios (data consistent with Figure~\ref{fig:sim10}), aligning with theoretical expectations. This validates the fundamental compression principle of the MCTR approach.

\begin{figure}[h]
    \centering
    \includegraphics[width=0.8\textwidth]{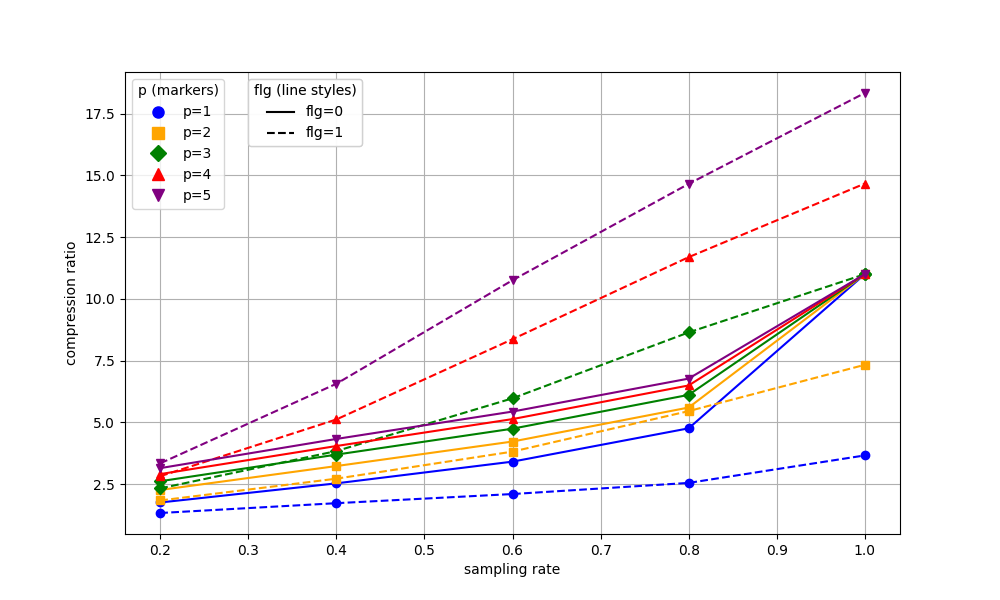}
    \caption{Simulation results with $k=10$: Compression ratio ($\cmpr = \weight(\M) / \weight(\W)$) of the MCTR compressed text representation $\W$ as a function of the sampling rate $r$. Consistent with Theorem~\ref{thm:MN}, the ratio approaches $k+1$ as the $k$-mer space becomes denser.}
    \label{fig:sim10}
\end{figure}

\subsection{Benchmark Performance and Comparison with State-of-the-Art}
Regarding benchmarks on standard datasets and comparisons with state-of-the-art tools, we evaluated MCTR against \texttt{greedytigs} (from the \texttt{matchtigs} toolkit v2.1.9 \cite{Schmidt2023matchtigs}) using the \textit{Escherichia coli} K-12 MG1555 reference genome (NCBI: GCF\_000005845.2) as a basis. We created two benchmark datasets:

\begin{enumerate}
    \item \textbf{Simulated Dataset (\texttt{sim}):} Low coverage (1x) Illumina-like paired-end reads generated using \texttt{wgsim} with a 1\% error rate (\texttt{sim\_11Mbp\_1x.fq}).
    \item \textbf{Real Dataset (\texttt{real}):} A subset of 20,000 real Illumina reads (SRA: SRR8185300 subsampled via ``\texttt{seqtk sample}; \texttt{real\_ecoli\_subset\_20k.fq}'').
\end{enumerate}

We compared the following methods for $k \in \{31, 51, 71\}$:

\begin{itemize}
    \item \textbf{\texttt{MCTR (list)}:} Our algorithm using the \textit{list representation}, providing a \textbf{lossless} representation of the input $k$-mer multiset. $\weight(\W)$ is the size of the \texttt{.ctr} file.
    \item \textbf{\texttt{MCTR (frequency)}:} Our algorithm using the \textit{frequency representation}, providing a \textbf{lossless} representation via \texttt{.ctr} and \texttt{.cnt} files. $\weight(\W)$ is the sum of the sizes of both files.
    \item \textbf{\texttt{GreedyTigs (ctr)}:} The \texttt{greedytigs} algorithm, outputting unitigs in FASTA format, representing a \textbf{lossy} (frequency-wise) sequence summary. $\weight(\W)$ is the size of the \texttt{.fasta} file.
\end{itemize}

This comparison focuses on the raw text representation size ($\weight(\W)$) generated by the core graph traversal algorithms relative to the input sequence data size ($\weight(\M)$, proxied by the intermediate \texttt{.reads} file size) before any secondary compression like BWT or gzip. Execution time measures the core algorithm runtime for MCTR (internally timed) and total wall-clock time for \texttt{GreedyTigs}.

\textbf{Results:}

The performance comparison is detailed in Table~\ref{tab:perf_summary} and visualized in Figure~\ref{fig:perf_comparison_raw}.

\begin{table}[h!]
\centering
\caption{Performance Comparison on E. coli Datasets (Raw Compression)}
\label{tab:perf_summary}
\resizebox{\textwidth}{!}{%
\begin{tabular}{l l r r r r}
\toprule
Dataset & Algorithm             & k  & Core/Total Time (s) & Raw Output Size (bytes) & Compr. Ratio \\
\midrule
sim     & MCTR (list)           & 31 & 2.57          & 4,689,720        & 1.00x                        \\ 
sim     & MCTR (frequency)      & 31 & 7.95          & 11,037,214       & 0.42x                        \\ 
sim     & GreedyTigs (ctr)      & 31 & 0.93          & 4,829,258        & 0.97x                        \\ 
sim     & MCTR (list)           & 51 & 2.35          & 4,689,725        & 1.00x                        \\ 
sim     & MCTR (frequency)      & 51 & 7.39          & 10,351,638       & 0.45x                        \\ 
sim     & GreedyTigs (ctr)      & 51 & 1.13          & 4,829,540        & 0.97x                        \\ 
sim     & MCTR (list)           & 71 & 2.11          & 4,690,292        & 1.00x                        \\ 
sim     & MCTR (frequency)      & 71 & 6.75          & 9,395,535        & 0.50x                        \\ 
sim     & GreedyTigs (ctr)      & 71 & 1.36          & 4,830,768        & 0.97x                        \\ 
\midrule
real    & MCTR (list)           & 31 & 1.67          & 6,080,000        & 1.00x                        \\ 
real    & MCTR (frequency)      & 31 & 1.66          & 2,247,433        & 2.71x                        \\ 
real    & GreedyTigs (ctr)      & 31 & 0.08          & 423,317          & 14.36x                       \\ 
real    & MCTR (list)           & 51 & 2.07          & 6,080,000        & 1.00x                        \\ 
real    & MCTR (frequency)      & 51 & 2.27          & 3,300,271        & 1.84x                        \\ 
real    & GreedyTigs (ctr)      & 51 & 0.08          & 423,317          & 14.36x                       \\ 
real    & MCTR (list)           & 71 & 2.10          & 6,080,000        & 1.00x                        \\ 
real    & MCTR (frequency)      & 71 & 3.11          & 4,144,386        & 1.47x                        \\ 
real    & GreedyTigs (ctr)      & 71 & 0.09          & 423,317          & 14.36x                       \\ 
\bottomrule
\end{tabular}
} 
\vspace{0.5em}
\begin{minipage}{\textwidth}
\footnotesize Time for MCTR is core algorithm time; time for GreedyTigs is total wall clock. Raw Output Size $\weight(\W)$ for MCTR (frequency) includes both \texttt{.ctr} and \texttt{.cnt} files. Compression Ratio $\cmpr = \weight(\M) / \weight(\W)$ calculated vs. intermediate \texttt{.reads} file size $\weight(\M)$ (sim: 4,691,456 bytes; real: 6,080,000 bytes). Ratios $\leq 1$ indicate data expansion.
\end{minipage}
\end{table}

\begin{figure}[h!]
    \centering
    \includegraphics[width=\textwidth]{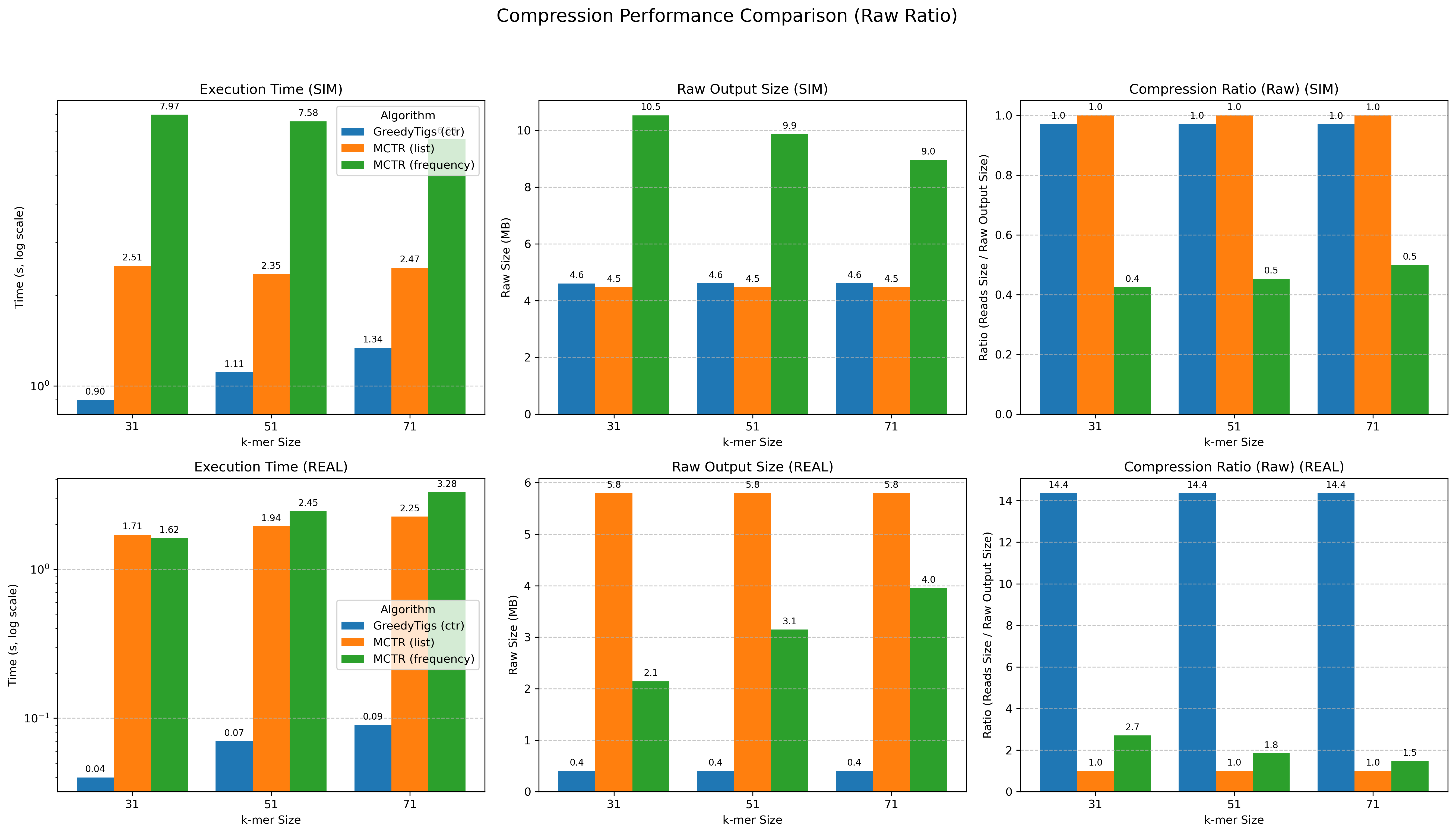}
    \caption{Performance comparison on simulated (top) and real (bottom) E. coli datasets for $k=31, 51, 71$. \textbf{Left:} Core/Total Execution Time (log scale, lower is better). \textbf{Center:} Raw Output Size ($\weight(\W)$ in MB, lower is better). \textbf{Right:} Raw Compression Ratio ($\cmpr = \weight(\M)/\weight(\W)$, higher is better). Note the data expansion ($\cmpr \leq 1$) for some methods, especially MCTR (frequency) on simulated data.}
    \label{fig:perf_comparison_raw}
\end{figure}

\textbf{Analysis:}

\begin{itemize}
    \item \textbf{Execution Time:} As shown in Figure~\ref{fig:perf_comparison_raw} (left panels) and Table~\ref{tab:perf_summary}, \texttt{GreedyTigs} is substantially faster than both MCTR variants across all conditions. While MCTR has linear time complexity, its current implementation's constant factors lead to longer runtimes on these datasets compared to the highly optimized \texttt{GreedyTigs}. \texttt{MCTR (list)} is faster than \texttt{MCTR (frequency)} due to the latter's counting overhead.

    \item \textbf{Compression Performance (Raw):} The raw compression ratio $\cmpr = \weight(\M) / \weight(\W)$ (Figure~\ref{fig:perf_comparison_raw}, right panels; Table~\ref{tab:perf_summary}) highlights the impact of representation choice and data noise. On the clean \textit{simulated data}, \texttt{MCTR (list)} achieves $\cmpr \approx 1.00$, indicating its raw output size is nearly identical to the input sequence size. \texttt{GreedyTigs} yields $\cmpr \approx 0.97$, suggesting slight expansion due to FASTA overheads. Notably, \texttt{MCTR (frequency)} results in $\cmpr < 1$ ($\approx 0.4-0.5$), indicating significant data expansion; the size overhead of the explicit count file (\texttt{.cnt}), containing mostly counts of `1' for this sparse dataset, outweighs the sequence compression in the \texttt{.ctr} file. On the \textit{real dataset}, \texttt{GreedyTigs} achieves a high raw compression ratio of $\cmpr \approx 14.4$, producing a much shorter sequence summary by likely simplifying error-prone graph regions (inherently losing frequency data). \texttt{MCTR (frequency)} achieves moderate raw compression ($\cmpr \approx 1.5-2.7$), showing benefit over the list representation as sequence compression outweighs count file overhead. \texttt{MCTR (list)} yields $\cmpr = 1.00$, indicating its lossless representation of all $k$-mers, including errors, resulted in an output size equal to the input sequence data for this dataset.
\end{itemize}

\newpage

\textbf{Discussion:}

These benchmarks clarify the trade-offs. MCTR guarantees a \textbf{lossless reconstruction of the $k$-mer multiset}. \texttt{MCTR (list)} offers a direct, space-efficient (raw size $\approx$ input size on clean data) lossless encoding. \texttt{MCTR (frequency)} provides decoupled counts but can lead to raw data expansion on sparse inputs due to count storage overhead, although it showed better raw compression than the list format on the noisy real dataset. In contrast, \texttt{GreedyTigs} optimizes for a compact \textbf{lossy sequence summary}, achieving high raw text compression ratios on noisy data by discarding frequency information and potentially simplifying sequences. MCTR remains the choice for applications demanding lossless $k$-mer spectrum preservation, while \texttt{GreedyTigs} is suited for generating concise sequence paths.

\subsection{Evaluation of BWT Enhancement}
Finally, we evaluate the effectiveness of the complete MCTR algorithm (Algorithm~\ref{alg:MCTR}), including the final BWT/RLE step applied to the compressed text representation $\W$. Using the SARS-CoV-2 datasets (SRR30994008, SRR30994007), we compared applying BWT directly to the original $k$-mer list versus applying BWT after MCTR.

\begin{figure}[h!]
    \centering
    \includegraphics[width=0.8\textwidth]{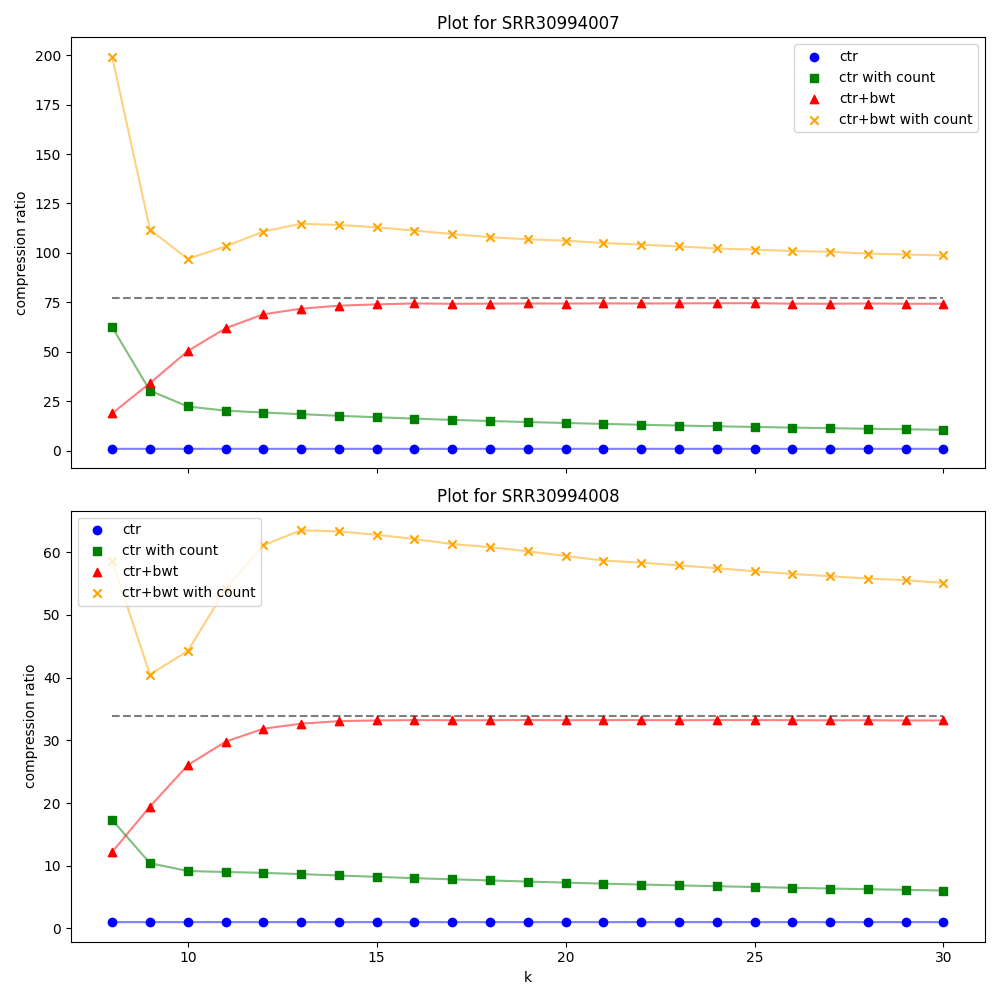}
    \caption{Comparison of compression ratios using BWT on SARS-CoV-2 datasets for $k=5$ to $k=30$. Ratios compare original $k$-mer list file size to final compressed size. ``MCTR (list) + BWT" and ``MCTR (frequency) + BWT" show the result of applying BWT to our respective text representations. ``BWT only" serves as baseline. The two-stage MCTR+BWT approach consistently yields higher lossless compression ratios.}
    \label{fig:crt_bwt}
\end{figure}

Figure~\ref{fig:crt_bwt} demonstrates that applying BWT \emph{after} MCTR consistently results in significantly higher lossless compression ratios than applying BWT alone across various $k$-values. This validates the design of our full two-stage algorithm, showing that MCTR's text representation effectively linearizes the de Bruijn graph structure, enhancing subsequent BWT compression performance.

\section{Conclusion and Future Research}\label{sec:conclusion}

We have presented MCTR, a novel approach for the \textbf{lossless} compression of $k$-mer datasets by utilizing optimal Eulerian covers of de Bruijn graphs. Our algorithm guarantees the minimum theoretical number of representing strings while preserving the complete $k$-mer multiset, including frequencies. We formally proved that MCTR achieves linear time and space complexity, ensuring scalability. The algorithm supports two lossless output formats: a direct list representation (\texttt{MCTR (list)}) and a decoupled frequency representation (\texttt{MCTR (frequency)}).

Experimental results on simulated data confirmed MCTR's adherence to theoretical properties. Benchmarks against the state-of-the-art lossy unitigging tool \texttt{greedytigs} on simulated and real E. coli data highlighted the practical implications of MCTR's lossless constraint when evaluating raw text compression ($\cmpr = \weight(\M)/\weight(\W)$). While \texttt{GreedyTigs} demonstrated superior speed, MCTR's runtime reflects its proven linear complexity, though practical speed is influenced by implementation factors. Regarding raw compression, on clean simulated data, \texttt{MCTR (list)} produced an output size nearly identical to the input sequence data ($\cmpr \approx 1.0$), while \texttt{MCTR (frequency)} resulted in data expansion ($\cmpr < 1$) due to the overhead of explicitly storing mostly unique counts. On noisy real data, \texttt{GreedyTigs} achieved high raw compression ($\cmpr \approx 14$), consistent with its goal of producing a concise (but frequency-lossy) sequence summary likely by simplifying error-prone regions. In contrast, \texttt{MCTR (list)} showed no raw compression ($\cmpr \approx 1.0$), faithfully representing all $k$-mers including errors, while \texttt{MCTR (frequency)} achieved moderate raw compression ($\cmpr \approx 1.5-2.7$), suggesting its structure can be beneficial on noisier datasets despite the count overhead. These results underscore that MCTR optimizes for lossless fidelity, which may not always translate to minimal raw text size compared to lossy methods, especially on noisy inputs.

Furthermore, evaluating the full MCTR pipeline including BWT confirmed its effectiveness for enhanced \textit{lossless} compression, significantly outperforming BWT applied directly to the original $k$-mers (Figure~\ref{fig:crt_bwt}).

In conclusion, MCTR provides a theoretically sound, efficient (in terms of asymptotic complexity), and rigorously validated \textbf{lossless} compression method for $k$-mer multisets. It serves as a crucial tool for applications where the integrity of the complete $k$-mer spectrum, including frequencies, is paramount, establishing a valuable baseline for lossless $k$-mer representation even if lossy methods achieve higher raw text or gzipped compression ratios through information reduction.

Future research directions include optimizing the MCTR implementation to reduce constant factors in runtime (e.g., through parallelization, optimized data structures). Investigating lossless error-aware graph modifications prior to Eulerization could enhance the raw text compressibility on noisy datasets without sacrificing data integrity. Applying MCTR to larger and more complex datasets (e.g., metagenomes) remains important. Finally, exploring direct bioinformatics analyses on MCTR's compressed representations could further streamline genomic data workflows.

\section*{Acknowledgments} The authors are thankful to the anonymous referees for providing valuable feedback and suggestions. The first author's work was sponsored by Natural Science Foundation of Chongqing, China (CSTB2025NSCQ-LZX0057, CSTB2025NSCQ-LZX0073), and also supported by the China Scholarship Council (CSC) (202308500094) and by the Doctoral Research  Project of Chongqing Normal University (21XLB020). The fourth author's work was supported by the research project of the Sobolev Institute of Mathematics (project FWNF-2022-0019).

\bibliographystyle{plain}
\bibliography{ref}

\end{document}